\numberwithin{equation}{section}
\newtheorem{theorem}{Theorem}[section]
\newtheorem{lemma}{Lemma}[section]
\numberwithin{equation}{section}
\begin{document}

\title[Average Regularity of the Solution to an Equation]
{Average Regularity of the Solution to an Equation with the Relativistic-free Transport Operator}

\author{Jianjun Huang}
\address{Department of Mathematics, Sun Yat-Sen University, 
Guangzhou 510275, China}
\email{mljzsu@163.com}

\author{Zhenglu Jiang$\!\!$*}
\address{Department of Mathematics, Sun Yat-Sen University, 
Guangzhou 510275, China}
\email{mcsjzl@mail.sysu.edu.cn}
\thanks{*Corresponding author. {\it Email:} mcsjzl@mail.sysu.edu.cn({Z.} Jiang)}

\subjclass[2000]{76P05; 35Q75}

\date{\today}


\keywords{Regularity; Transport Operator; Relativistic Boltzmann Equation}

\begin{abstract}
Let $u=u(t,{\bf x},{\bf p})$ satisfy the transport equation $\frac {\partial u}{\partial t}+\frac {{\bf p}}{p_0}\frac{\partial u}{\partial{\bf x}}=f$, 
where $f=f(t,\bf x,\bf p)$ belongs to $ L^{p}((0,T)\times {\bf R}^{3}\times {\bf R}^{3})$ for $1<p<\infty$ and 
$\frac {\partial}{\partial t}+\frac {{\bf p}}{p_0}\frac{\partial}{\partial{\bf x}}$ is the relativistic-free transport operator. 
We show the regularity of $\int_{{\bf R}^{3}}u(t, {\bf x}, {\bf p})d{\bf p}$ using the same method as given by Golse, Lions, Perthame and Sentis. 
This average regularity is considered in terms of fractional Sobolev spaces 
and it is very useful for the study of the existence of the solution to the Cauchy problem on the relativistic Boltzmann equation.
\end{abstract}

\maketitle
\vspace*{-0.7cm}
\section{Introduction}
\label{intro}
We are concerned with the average regularity of the solution to an equation 
with the relativistic-free transport operator from the relativistic Boltzmann equation.
Let us begin with the relativistic Boltzmann equation in the following form:
\begin{equation}
\frac {\partial u}{\partial t}+\frac {{\bf p}}{p_0}\frac{\partial u}{\partial{\bf x}}=Q(u,u),
\end{equation}

\noindent where $u=u(t,{\bf x},{\bf p})$ is a distribution function of a one-particle relativistic gas 
with the time $t\in(0,\infty)$, the position ${\bf x}\in {\bf R}^{3}$, and the momentum ${\bf p}\in {\bf R}^{3}$; 
 $p_0=(1+|{\bf p}|^2)^{1/2}$ denotes the energy of a dimensionless relativistic gas particle with the momentum ${\bf p}$;
$\frac {\partial }{\partial t}+\frac {{\bf p}}{p_0}\frac{\partial }{\partial{\bf x}}$ is called the relativistic-free transport operator; 
$Q(u,u)$ is the relativistic Boltzmann collision operator  which can be written as the difference between the gain and loss terms respectively 
given by Dudy\'{n}ski and Ekiel-Je\.{z}ewska \cite{de88} in the following forms:
\begin{equation}
\label{qplus}
Q^{+}(u, u)=\int_{{\bf R}^{3}\times S^{2}} \frac{gs^{1/2}}{p_{0}p_{\ast 0}}\sigma(g, \theta) u( t, {\bf x}, {\bf p}^{\prime})u( t, {\bf x}, {\bf p}_{\ast}^{\prime})
d{\omega}d{\bf p}_{\ast},
\end{equation}
\begin{equation}
\label{qminus}
Q^{-}(u, u)=\int_{{\bf R}^{3}\times S^{2}} \frac{gs^{1/2}}{p_{0}p_{\ast 0}}\sigma(g, \theta) u( t, {\bf x}, {\bf p})u( t, {\bf x}, {\bf p}_{\ast})
d{\omega}d{\bf p}_{\ast}.
\end{equation}
It is worth mentioning that the gain and loss terms of the relativistic Boltzmann equation can be expressed in other various forms (see \cite{gv}).  
The other different parts in {eqs.} (\ref{qplus}) and (\ref{qminus}) are explained as follows.

${\bf p}$ and $ {\bf p}_{\ast}$ are dimensionless momenta of two relativistic  particles 
immediately before collision while ${\bf p}^{\prime}$ and $ {\bf p}_{\ast}^{\prime}$ are dimensionless momenta after collision; 
$p_{\ast 0}=(1+|{\bf p}_{\ast}|^2)^{1/2}$ denotes the dimensionless energy of the colliding relativistic gas particle 
with the momentum ${\bf p}_{\ast}$ before collision, and as used below in the same way, 
$p_{0}^{\prime}=(1+|{\bf p}^{\prime}|^2)^{1/2}$ and $p_{\ast 0}^{\prime}=(1+|{\bf p}_{\ast}^{\prime}|^2)^{1/2}$  
represent the dimensionless energy of the two relativistic  particles after collision.
$s={|p_{\ast0}+p_{0}|^{2}-|{\bf p}_{\ast}+{\bf p}|^{2}}$  and $s^{1/2}$ is the total energy in the center-of-mass frame \cite{de88}; 
$g=\sqrt{|{\bf p}_{\ast}-{\bf p}|^{2}-|p_{\ast0}-p_{0}|^{2}}/2$ and $2g$ is in fact the value of the relative momentum 
in the center-of-mass frame \cite{de88}; it can be seen that $s=4+4g^{2}$. 
$\sigma(g, \theta)$ is the differential scattering cross section of  the variable $g$ and the scattering angle $\theta$. 
${\bf R}^{3}$ is a three-dimensional Euclidean space and $S^2$ a unit sphere surface with an infinitesimal element 
$d{\omega}=\sin\theta d\theta d\varphi$ for the scattering angle $\theta\in [0,\pi]$ and the other solid angle $\varphi\in [0,2\pi]$ 
in the center-of-momentum system, and the scattering angle $\theta$ is defined by  
$\cos\theta=1-2[(p_{0}-p_{\ast0})(p_{0}-p_{0}^{\prime})-({\bf p}-{\bf p}_{\ast})({\bf p}-{\bf p}^{\prime})]/(4-s)$.

There is a long history of the study of the relativistic Boltzmann equation as one of the most important in the relativistic kinetic theory. 
The study of the relativistic kinetic theory began in 1911 when J$\ddot{u}$ttner \cite{ju} derived an equilibrium distribution function of relativitic gases.
Lichnerowicz and Marrot \cite{lm} were the first to derive the full relativistic Boltzmann equation including the collision operator in 1940.
The research of this eqution can be roughly classified into four aspects: 
1) the derivation of this equation; 
2) its relativistic hydrodynamic limit. 
3) its Chapman-Enskog approximation and hydrodynamic modes;
4) the existence and uniqueness of the solution to the Cauchy problem on it;
For  both of 1) and 2), 
we can see the recent references from Dolan \cite{pz}, Debbasch \& Leeuwen \cite{fl1} \cite{fl2}, Tsumura \& Kunihiro \cite{tk} and Denicol et {al.}~\cite{dhm}. 
For 3), in the early 60's, many researchers, such as Israel \cite{i}, applied the Chapman-Enskog expansion 
into studying the aproximative solution to the relativistic Boltzmann equation. 
The progress of the last research fields are recently great. 
In 1967, Bichteler \cite{b} first proved  that the relativistic Boltzmann equation admits a unique local solution 
under the assumptions that the differential scattering cross-section is bounded and that the intial distribution function decays exponentially with energy.
In 1988, Dudy\'{n}ski and Ekiel-Je\.{z}ewska \cite{de88} proved that the Cauchy problem  
on the lineared relativistic Boltzmann equation has a unique solution in $L^{2}$ space.
Four years later, Dudy\'{n}ski and Ekiel-Je\.{z}ewska \cite{de92} showed that there exists a DiPerna-Lions renormalized solution \cite{dl} 
to the Cauchy problem on the relativistic Boltzmann equation with large initial data in the case of the relativistic soft interactions. 
Glassey and Strauss \cite{gs93} proved in 1993 that a unique global smooth solution 
to this problem exponentially converges to a relativistic Maxwellian as the time goes to infinity 
if all initial data are periodic in the space variable and near equilibrium. 
Then in 1995, Glassey and Strauss extended the above result to the whole space case \cite{gs95} and found that the solution 
has the property of polynomial convergence with respect to the time.
 In 1996, Andr\'{e}asson \cite{a96} showed the regularity of the gain term and the strong $L^{1}$ convergence 
to equilibrium for the relativistic Boltzmann equation.
Afterward, Jiang gave the global existence of solution to the relativistic Boltzmann equation with hard interactions 
in the whole space for initial data with finite mass, energy and inertia \cite{j}, or in a periodic box 
for initial data with finite mass and energy \cite{j98} \cite{j99}.
In 2004, Andr\'{e}asson, Calogero and Illner \cite{aci04} obtained the property that the solution to the gain-term-only relativistic Boltzmann equation 
blows up in finite time.
In 2006, Glassey \cite{g06} showed a unique global solution to the relativistic Boltzmann equation 
with initial data  near vacuum state for certain differential scattering cross section.
In 2008, Jiang \cite{j07} obtained the global existence of solution to the relativistic Boltzmann equation with hard interactions in the whole space 
for initial data with finite mass and energy.
Recently, Strain gave a Newtonian limit of global solutions to the Cauchy problem on 
the relativistic  Boltzmann equation near vacuum \cite{s2010a}, showed the asymptotic stability of the relativistic Boltzmann equantion 
for the soft potentials \cite{s2010b} and made a survey of various equivalent expressions of the relativistic Boltzmann equantion \cite{s2011}. 
There are also many  authors who are contributed to the study of relativistic Boltzmann equation, e.g., 
Escobedo et {al.}~\cite{emv}, Ha et {al.}~\cite{hk},  Hsiao \& Yu \cite{hy}, Swart \cite{s}. 
Many other relevant results can be found in the references mentioned above.

In this paper, we show the regularity of the momentum average of the solution to an equation 
with the relativistic-free transport operator 
by use of the same method as given by Golse, Lions, Perthame and Sentis \cite{gl}.
This regularity is very useful for our further study of the existence and uniqueness of the solution to the Cauchy problem 
on the relativistic Boltzmann equation. It is presented by the following theorem: 
\begin{theorem}
\label{th1}
Assume that $u$ and $f$ satisfy the following equation:
\begin{equation}
\label{traneqn}
\frac {\partial u}{\partial t}+\frac {{\bf p}}
{p_0}\frac{\partial u}{\partial{\bf x}}=f 
\end {equation}
 where ${\sf supp} u\subset [\varepsilon_{0},T-\varepsilon_{0}]\times {\bf R}^{3}\times B_{R}$, $T\in(0,\infty), \varepsilon_{0}\in(0,\frac{T}{2})$ 
and $B_R$ is a ball centered on the origin and with a radius of $R$.
 If $u$ and $f$ 
both belong to $L^{p}((0,T)\times {\bf R}^{3}\times {\bf R}^{3})$ for $p \in (1, +\infty)$,
 we can conclude that $\widetilde{u}=\int_{{\bf R}^{3}}u(t, {\bf x}, {\bf p})d{\bf p}\in W^{s,p}((0,T)\times {\bf R}^{3})$  
for every $s$ satisfying $0<s<\inf(\frac{1}{p},1-\frac{1}{p})$, and what's more, there exists a constant $C$ 
which does not depend on $u$ and $f$, such that 
\begin{equation}
\label{result}
\|\widetilde{u}\|_{W^{s,p}}\leq C \| u \|_{L^{p}}^{1-s} \|f\|_{L^{p}}^{s}
\end{equation}
\noindent where 
\begin{equation}
\label{norm}
\|\widetilde{u}\|_{W^{s,p}}=(\int_{0}^T\int_{{\bf R}^{3}}\int_{0}^T\int_{{\bf R}^{3}}
\frac{|\widetilde{u(t_{1},{\bf x}_{1})}-\widetilde{u(t_{2},{\bf x}_{2})}|^{2}}{[(t_{1}-t_{2})^{2}
+({\bf x}_{1}-{\bf x}_{2})^{2}]^{\frac{4+sp}{2}}}d{\bf x}_{1}dt_{1}d{\bf x}_{2}dt_{2})^{1/p}.
\end{equation}
\end{theorem}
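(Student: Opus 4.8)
The plan is to follow the Fourier-analytic form of the averaging method of Golse, Lions, Perthame and Sentis \cite{gl}. First I would use the hypothesis ${\sf supp}\,u\subset[\varepsilon_{0},T-\varepsilon_{0}]\times{\bf R}^{3}\times B_{R}$ to extend $u$ and $f$ by zero in $t$ to all of ${\bf R}$, so that the Fourier transform in the variables $(t,{\bf x})$ is available; write $(\tau,{\bf k})$ for the dual variables. Since the relativistic-free transport operator $\frac{\partial}{\partial t}+\frac{{\bf p}}{p_0}\cdot\frac{\partial}{\partial{\bf x}}$ becomes multiplication by $i\,a(\tau,{\bf k},{\bf p})$ with $a(\tau,{\bf k},{\bf p})=\tau+\frac{{\bf p}}{p_0}\cdot{\bf k}$, equation (\ref{traneqn}) transforms into $i\,a\,\widehat u=\widehat f$. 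Hence $\widehat u=\widehat f/(i\,a)$ wherever $a\neq0$, and the transform of the average is $\widehat{\widetilde u}(\tau,{\bf k})=\int_{B_R}\widehat u(\tau,{\bf k},{\bf p})\,d{\bf p}$, the ${\bf p}$-integration being confined to $B_R$ by the support hypothesis.

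The decisive, genuinely relativistic step is a non-degeneracy estimate for the level sets of the symbol: for every $\lambda>0$ and every $(\tau,{\bf k})$ with ${\bf k}\neq0$,
\begin{equation}
\label{nondeg}
\mu\Bigl(\bigl\{{\bf p}\in B_R:\ |a(\tau,{\bf k},{\bf p})|\le\lambda\bigr\}\Bigr)\le C_R\,\frac{\lambda}{|{\bf k}|}.
\end{equation}
To establish (\ref{nondeg}) I would rotate coordinates so that ${\bf k}=|{\bf k}|\,e_1$ and study the relativistic velocity component $g({\bf p})=p_1/p_0$. A direct computation gives $\partial g/\partial p_1=(1+p_2^2+p_3^2)/p_0^3$, which on $B_R$ is bounded below by $(1+R^2)^{-3/2}$; hence for fixed $(p_2,p_3)$ the set of admissible $p_1$ is an interval of length at most $2(1+R^2)^{3/2}\lambda/|{\bf k}|$, and integrating over $(p_2,p_3)$ in the disc of radius $R$ yields (\ref{nondeg}). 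This is exactly where the structure of the operator enters: the map ${\bf p}\mapsto{\bf p}/p_0$ is a diffeomorphism of ${\bf R}^3$ onto the open unit ball whose differential is uniformly nondegenerate on the compact set $B_R$.

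With (\ref{nondeg}) available I would split the momentum integral at a threshold $\lambda$, writing $\widehat{\widetilde u}=\int_{|a|\le\lambda}\widehat u\,d{\bf p}+\int_{|a|>\lambda}\widehat u\,d{\bf p}$. For $p=2$ each piece is controlled by the Cauchy--Schwarz inequality: the resonant piece by (\ref{nondeg}) times $\int_{B_R}|\widehat u|^2\,d{\bf p}$, and the non-resonant piece, using $\widehat u=\widehat f/(i\,a)$ together with a dyadic summation of (\ref{nondeg}) over the shells $|a|\sim2^{j}\lambda$, by $C_R(\lambda|{\bf k}|)^{-1}\int_{B_R}|\widehat f|^2\,d{\bf p}$. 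This produces the pointwise frequency bound $|\widehat{\widetilde u}|^2\le C_R\bigl(\frac{\lambda}{|{\bf k}|}U+\frac{1}{\lambda|{\bf k}|}F\bigr)$, where $U=\int_{B_R}|\widehat u|^2\,d{\bf p}$ and $F=\int_{B_R}|\widehat f|^2\,d{\bf p}$. Since the resonant set is empty unless $|\tau|\le|{\bf k}|+\lambda$, on the region that matters $(|\tau|^2+|{\bf k}|^2)^{1/2}$ and $|{\bf k}|$ are comparable.

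Finally I would multiply by the Fourier weight $(|\tau|^2+|{\bf k}|^2)^{s}$, choose $\lambda=\Lambda\,|{\bf k}|^{1-2s}$, and integrate in $(\tau,{\bf k})$; by Plancherel the two terms are then dominated by $C_R\Lambda\|u\|_{L^2}^2$ and $C_R\Lambda^{-1}\|f\|_{L^2}^2$, the residual powers of $|{\bf k}|$ being nonpositive precisely when $0<s<\tfrac12=\inf(\tfrac1p,1-\tfrac1p)$. Real-interpolating this averaging gain against the elementary bound $\|\widetilde u\|_{L^2}\le|B_R|^{1/2}\|u\|_{L^2}$ at $s=0$ then yields the inequality (\ref{result}) with exponents $1-s$ and $s$ for $p=2$. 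For general $p\in(1,+\infty)$ I would run the real-variable form of the same decomposition, replacing Plancherel by the $L^p$ estimates underlying the argument of \cite{gl} and the Gagliardo characterization (\ref{norm}) of $W^{s,p}$, which recovers the full range $0<s<\inf(\frac1p,1-\frac1p)$. I expect the two hard points to be the uniform non-degeneracy (\ref{nondeg}), where the compactness of $B_R$ and the lower bound on $\partial g/\partial p_1$ are essential, and the passage from $p=2$ to general $p$, where the loss of Plancherel forces the more delicate interpolation step of \cite{gl}.
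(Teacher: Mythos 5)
The $p=2$ half of your argument is sound, and it is essentially the paper's own proof of Lemma \ref{le2} (Appendix \ref{app}): Fourier transform in $(t,{\bf x})$, a level-set measure bound on $B_R$ exploiting the non-degeneracy of ${\bf p}\mapsto{\bf p}/p_0$ (the paper's Lemma \ref{le3}), a resonant/non-resonant splitting of the momentum integral, Cauchy--Schwarz on each piece, and optimization of the threshold. One wrinkle: your per-frequency choice $\lambda=\Lambda|{\bf k}|^{1-2s}$ for $s<\tfrac12$ leaves a residual factor $|{\bf k}|^{4s-2}$ multiplying $\int_{B_R}|\widehat f|^2d{\bf p}$, and since $4s-2<0$ this is \emph{not} dominated by $\|f\|_{L^2}^2$ near ${\bf k}=0$ (negative powers of $|{\bf k}|$ blow up there, not stay bounded). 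This is harmless only because of the route you then take anyway: prove the clean $s=\tfrac12$ estimate with a threshold independent of frequency, as in (\ref{3}) of the paper, and interpolate against $\|\widetilde u\|_{L^2}\le C\|u\|_{L^2}$ to cover $0<s<\tfrac12$ with the exponents $1-s$ and $s$.

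The genuine gap is the case $p\neq2$, which is the actual content of Theorem \ref{th1} (the $L^2$ statement is the already-known Lemma \ref{le2}). You defer it to ``the $L^p$ estimates underlying \cite{gl}'' and to a ``more delicate interpolation step,'' but you never construct the object to which interpolation could be applied: interpolation requires a single linear operator acting on a scale of spaces, whereas your setup carries two functions $u,f$ coupled by the equation, and the Plancherel-based splitting has no direct real-variable analogue in $L^p$. The paper's key device, absent from your proposal, is to set $h=u+f$, rewrite (\ref{traneqn}) as $u+\frac{\partial u}{\partial t}+\frac{{\bf p}}{p_0}\cdot\frac{\partial u}{\partial{\bf x}}=h$, solve by the damped Duhamel formula (\ref{h2}), and view $\widetilde u=\mathfrak F(h)$ as a linear function of $h$ alone, as in (\ref{oper2}). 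One then proves the elementary bounds $\mathfrak F:L^q_R\to L^q$ for every $q\in[1,\infty]$ (H\"older plus the change of variables along characteristics), the bound $\mathfrak F:L^2_R\to H^{1/2}$ from Lemma \ref{le2}, and applies complex interpolation (\cite{bl}, \cite{t}); the admissible range $0<s<\inf(\frac1p,1-\frac1p)$ is exactly the trace of the two endpoints $q=\infty$ (giving $s<\frac1p$) and $q=1$ (giving $s<1-\frac1p$), a range your sketch has no mechanism to produce. Finally, even granted the additive bound $\|\widetilde u\|_{W^{s,p}}\le C(\|u\|_{L^p}+\|f\|_{L^p})$, the multiplicative inequality (\ref{result}) for $p\neq2$ still needs an argument: the paper obtains it from the scaling $u_\lambda(t,{\bf x},{\bf p})=u(\lambda t,\lambda{\bf x},{\bf p})$, which leaves ${\bf p}$ (hence the coefficient ${\bf p}/p_0$) untouched, followed by optimization in $\lambda$; your proposal addresses this point only for $p=2$.
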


The norm of $\widetilde{u}$, given in (\ref{norm}), is defined in a 
fractional Sobolev space which can be seen in the book of  Triebel \cite{t}. 
In order to prove Theorem \ref{th1}, we first have to consider the special case when $p=2$, that is, 
we investigate the regularity of $\int_{{\bf R}^{3}}u(t, {\bf x}, {\bf p})d{\bf p}$ 
when $u$ and $f$ belong to the $L^{2}$ space. 
This regularity is in fact presented by the following lemma: 

\begin{lemma}[\cite{j97}]
\label{le2}
Assume that $u$ and $f$ satisfy (\ref{traneqn}) and that 
 ${\sf supp} u\subset [\varepsilon_{0},T-\varepsilon_{0}]\times {\bf R}^{3}\times B_{R}$, where 
$T\in(0,\infty), \varepsilon_{0}\in(0,\frac{T}{2})$ and $B_R$ is a ball centered on the origin and with a radius of $R$.
 If $u$ and  $f$ belong to $L^{2}((0,T)\times {\bf R}^{3}\times {\bf R}^{3})$,
 we can conclude that $\widetilde{u}=\int_{{\bf R}^{3}}u(t, {\bf x}, {\bf p})d{\bf p}\in W^{\frac{1}{2},2}((0,T)\times {\bf R}^{3})$, 
and what's more, there exists a constant $C$ 
which does not depend on $u$ and $f$, such that
\begin{equation}
\|\widetilde{u}\|_{H^{\frac{1}{2}}}\leq C\| u \|_{L^{2}}^{1/2} \|f\|_{L^{2}}^{1/2}
\end{equation}
\noindent where 
\begin{equation}
\label{normh1}
\|\widetilde{u}\|_{H^{\frac{1}{2}}((0,T)\times {\bf R}^{3})}=
(\int_{0}^T\int_{{\bf R}^{3}}\int_{0}^T\int_{{\bf R}^{3}}\frac{|\widetilde{u(t_{1},{\bf x}_{1})}-\widetilde{u(t_{2},{\bf x}_{2})}|^{2}}
{[(t_{1}-t_{2})^{2}+({\bf x}_{1}-{\bf x}_{2})^{2}]^{\frac{5}{2}}}d{\bf x}_{1}dt_{1}d{\bf x}_{2}dt_{2})^{1/2}.
\end{equation}
\end{lemma}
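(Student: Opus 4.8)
The plan is to pass to the Fourier transform in the variables $(t,{\bf x})$ and to reduce the assertion to a weighted estimate on the symbol of the transport operator. Writing $(\tau,{\bf k})$ for the variables dual to $(t,{\bf x})$ and ${\bf v}={\bf p}/p_0$ for the relativistic velocity, equation (\ref{traneqn}) becomes $i(\tau+{\bf v}\cdot{\bf k})\,\widehat u(\tau,{\bf k},{\bf p})=\widehat f(\tau,{\bf k},{\bf p})$, so that $\widehat{\widetilde u}(\tau,{\bf k})=\int_{B_R}\widehat u\,d{\bf p}$. Since ${\sf supp}\,u$ is compact in the $t$-variable, both $u$ and $f=\partial_t u+{\bf v}\cdot\partial_{\bf x}u$ are supported in $[\varepsilon_0,T-\varepsilon_0]\times{\bf R}^3\times B_R$; extending $\widetilde u$ by zero to ${\bf R}\times{\bf R}^3$ only enlarges the double integral in (\ref{normh1}), and by the standard equivalence between the Gagliardo seminorm and its Fourier description (Triebel \cite{t}) it suffices to bound $\int m\,|\widehat{\widetilde u}(\tau,{\bf k})|^2\,d\tau\,d{\bf k}$ by $C\|u\|_{L^2}\|f\|_{L^2}$, where $m=(\tau^2+|{\bf k}|^2)^{1/2}$. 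Throughout I write $U(\tau,{\bf k})=(\int_{B_R}|\widehat u|^2d{\bf p})^{1/2}$ and $F(\tau,{\bf k})=(\int_{B_R}|\widehat f|^2d{\bf p})^{1/2}$, so that $\int U^2\,d\tau d{\bf k}=\|u\|_{L^2}^2$ and $\int F^2\,d\tau d{\bf k}=\|f\|_{L^2}^2$ by Plancherel.

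Next, for a parameter $\delta>0$ I would split the ${\bf p}$-integral according to whether $|\tau+{\bf v}\cdot{\bf k}|\le\delta$ or not. On the first set I estimate $\widehat u$ directly and on its complement I use $\widehat u=\widehat f/[i(\tau+{\bf v}\cdot{\bf k})]$; the Cauchy--Schwarz inequality then gives $|\widehat{\widetilde u}|^2\le 2G(\tau,{\bf k};\delta)\,U^2+2H(\tau,{\bf k};\delta)\,F^2$, where $G=\int_{|\tau+{\bf v}\cdot{\bf k}|\le\delta}d{\bf p}$ and $H=\int_{|\tau+{\bf v}\cdot{\bf k}|>\delta}(\tau+{\bf v}\cdot{\bf k})^{-2}d{\bf p}$. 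The crux is to prove the two measure estimates $G\le C\,\delta/|{\bf k}|$ and $H\le C/(\delta|{\bf k}|)$ uniformly in $\tau$. For this I change variables from ${\bf p}$ to ${\bf v}={\bf p}/p_0$, which maps $B_R$ diffeomorphically onto the ball $\{|{\bf v}|<R/\sqrt{1+R^2}\}$ with Jacobian $d{\bf p}=p_0^5\,d{\bf v}$ bounded above on $B_R$; in the ${\bf v}$-variable the set $\{|\tau+{\bf v}\cdot{\bf k}|\le\delta\}$ is a slab of thickness $2\delta/|{\bf k}|$ meeting a fixed ball, and slicing by the level value $s=\tau+{\bf v}\cdot{\bf k}$ yields both bounds at once.

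With these estimates I would split frequency space into the resonant regime $|\tau|\le 2|{\bf k}|$ and the non-resonant regime $|\tau|>2|{\bf k}|$. In the resonant regime $m\sim|{\bf k}|$, so $m|\widehat{\widetilde u}|^2\le C(\delta U^2+\delta^{-1}F^2)$; choosing $\delta=F/U$ gives the pointwise bound $m|\widehat{\widetilde u}|^2\le C\,U(\tau,{\bf k})F(\tau,{\bf k})$ (the degenerate cases in which $F/U$ leaves the admissible range for the slab estimate are absorbed by the trivial bound $G\le|B_R|$). In the non-resonant regime the inequality $|{\bf v}|<1$ forces $|\tau+{\bf v}\cdot{\bf k}|>|\tau|-|{\bf k}|>|\tau|/2\gtrsim m$, so there is no singularity; combining the trivial bound $|\widehat{\widetilde u}|\le C\,U$ with the equation bound $|\widehat{\widetilde u}|\le C\,m^{-1}F$ through their geometric mean again yields $m|\widehat{\widetilde u}|^2\le C\,UF$. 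Integrating the common pointwise bound over $(\tau,{\bf k})$ and applying the Cauchy--Schwarz inequality together with Plancherel gives $\int m|\widehat{\widetilde u}|^2\,d\tau d{\bf k}\le C\int UF\,d\tau d{\bf k}\le C\|u\|_{L^2}\|f\|_{L^2}$, which is the desired estimate, and all constants depend only on $R$.

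The main obstacle is the uniform slab estimate of the second paragraph, that is, the non-degeneracy (in $\tau$ and ${\bf k}$) of the relativistic symbol $\tau+({\bf p}/p_0)\cdot{\bf k}$. The relativistic setting is in fact favourable here: unlike a general velocity field, the map ${\bf p}\mapsto{\bf p}/p_0$ is a global diffeomorphism of ${\bf R}^3$ onto the open unit ball with Jacobian $p_0^{-5}$ bounded above and below on $B_R$, so the curved relativistic slabs pull back to flat slabs in ${\bf v}$ of comparable measure, and the linear dependence of ${\bf v}\cdot{\bf k}$ on ${\bf v}$ supplies the transversality responsible for the gain $1/|{\bf k}|$. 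Once this geometric estimate is secured, the remaining steps are the routine optimisation in $\delta$ and the two Cauchy--Schwarz inequalities described above.
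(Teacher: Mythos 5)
Your proposal is correct, and it follows the same overall velocity-averaging strategy as the paper (and as Golse--Lions--Perthame--Sentis): Fourier transform in $(t,{\bf x})$, split the ${\bf p}$-integral according to the size of the symbol $\tau+\frac{{\bf p}\cdot{\bf k}}{p_0}$, apply Cauchy--Schwarz, control the small-symbol set by a measure estimate and the complementary region through the equation, then optimize the threshold. The execution, however, differs from the paper's in two genuine ways. First, your key geometric input is the flat-slab bound $G\le C\delta/|{\bf k}|$, $H\le C/(\delta|{\bf k}|)$, obtained by the substitution ${\bf v}={\bf p}/p_0$ (Jacobian $d{\bf p}=p_0^{5}d{\bf v}$, bounded on $B_R$); since this gains only $|{\bf k}|^{-1}$ rather than $(\tau^2+|{\bf k}|^2)^{-1/2}$, you are forced to add the resonant/non-resonant dichotomy $|\tau|\le 2|{\bf k}|$ versus $|\tau|>2|{\bf k}|$, using $|{\bf v}|<1$ to rule out any singularity in the second regime. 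The paper instead proves the measure bound for the curved sets directly in ${\bf p}$-space, normalized over unit frequency directions $(e',{\bf e})=(\tau,{\bf z})/\sqrt{\tau^2+|{\bf z}|^2}$ (Lemma \ref{le3}, plus the layer-cake Lemma \ref{le4}); there the far-from-resonance directions are absorbed into the lemma itself, because when $|e'|-R|{\bf e}|>\varepsilon$ the set is empty, so no frequency split is ever needed. Second, you optimize the threshold $\delta$ pointwise in $(\tau,{\bf k})$, obtaining $(\tau^2+|{\bf k}|^2)^{1/2}|\widehat{\widetilde u}|^2\le C\,UF$ and then applying Cauchy--Schwarz in frequency, whereas the paper takes a single global threshold $\alpha$ and balances the two integrated terms at the end; both yield the same product bound $C\|u\|_{L^2}\|f\|_{L^2}$, but your pointwise bound is marginally stronger and avoids the degenerate-denominator issue only up to the $U=0$ or $F=0$ cases, which you correctly note are trivial. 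What each route buys: your velocity substitution makes the transversality completely elementary (flat slabs meeting a fixed ball) and is the natural modern treatment of the relativistic symbol, while the paper's normalized measure lemma packages the geometry into two reusable statements and dispenses with any case analysis in frequency space. Your remaining housekeeping (extension by zero only enlarges the Gagliardo integral, equivalence with the Fourier characterization of the $H^{1/2}$ norm, momentum supports, Plancherel) matches the paper's and is sound.
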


Lemma \ref{le2} was given by Jiang \cite{j97} (or see Appendix \ref{app}) in 1997.  
His proof is similiar to that used by Golse, Lions, Perthame and Sentis \cite{gl}, that is, 
it is mainly based on the analysis of the integral in the norm $\|\widetilde{u}\|_{H^{\frac{1}{2}}({\bf R}\times {\bf R}^{3})}$ 
 by use of an estimate of a special integral in a subset 
in the ball $B_{R}=\{{\bf p}: |{\bf p}|<R, {\bf p}\in {\bf R}^3\}$.

It is worth mentioning that Lemma \ref{le2} can be regarded as a special case of the well established result given by Rein \cite{r2004} in 2004. 
Rein's result includes the fact that  
if $u$ has compact supports in $L^{2}({\bf R} \times {\bf R}^{3}\times B_{R})$ and satisfies
\begin{equation}
\label{traneqn1}
\frac {\partial u}{\partial t}+\frac {{\bf p}}
{p_0}\frac{\partial u}{\partial{\bf x}}=g_0+{\rm div}_{\bf p}g_{1}
\end {equation}
for $g_0$ and $g_1$  in $L^{2}({\bf R} \times {\bf R}^{3}\times B_{R})$, 
then the momentum average $\widetilde{u}$ belongs to $H^{1/4}({\bf R} \times {\bf R}^{3})$. 
When $g_{1}$ is equal to zero,
(\ref{traneqn1}) is reduced to be (\ref{traneqn}).
Jiang's result in Lemma \ref{le2} shows that in this case $\widetilde{u}$ belongs to $H^{1/2}({\bf R} \times {\bf R}^{3})$.  
Since $u\in L^{2}({\bf R}\times {\bf R}^{3}\times B_{R})$,  
it can be further found that $\widetilde{u}$ belongs to $L^{2}({\bf R} \times  {\bf R}^{3})$ as well. 
Hence, by use of Proposition 1.32 in \cite{bcd}, 
it can be known that $\widetilde{u}$ belongs to $H^{1/4}({\bf R} \times {\bf R}^{3})$ in Lemma \ref{le2} too. 

Theorem \ref{th1} is a generalization of Lemma  \ref{le2} into the $L^{p}$ space for $p\in (1,+\infty)$ and it can be 
 proved by the analysis of the operator $\mathfrak{F}: (u,f)\rightarrow \widetilde{u}$,  
with the help of the complex interpolation method (see \cite{bl}\cite{t}).
The detailed proof of Theorem \ref{th1} will be shown in the next section.

\section{Proof for Theorem \ref{th1}}
\label{sec2}
In order to prove Theorem \ref{th1}, we first observe some properties of the solutions to  {Eqn.}~(\ref{traneqn}).
Notice that  $u$ and $f$ satisfy {Eqn.}~(\ref{traneqn}), 
and that ${\sf supp} u\subset [\varepsilon_{0},T-\varepsilon_{0}]\times {\bf R}^{3}\times B_{R}$.  
It follows that $u$ is in fact a unique solution to  the Cauchy problem on {Eqn.}~(\ref{traneqn}) with its initial datum zero 
when $f\in L^{p}((0,T)\times {\bf R}^{3}\times {\bf R}^{3})$ for any $p\in (1,+\infty)$. 
Denote $u^{\sharp}(t, {\bf x}, {\bf p})\triangleq u(t, {\bf x}+\frac{\bf p}{p_{0}}t, {\bf p})$ 
and $f^{\sharp}(t, {\bf x}, {\bf p})\triangleq f(t, {\bf x}+\frac{\bf p}{p_{0}}t, {\bf p})$.   
Then {Eqn.}~(\ref{traneqn}) can be rewritten as 
\begin{equation}
\label{3.2}
\frac {\partial u^{\sharp}}{\partial t}+\frac {{\bf p}}
{p_0}\frac{\partial u^{\sharp}}{\partial{\bf x}}=f^{\sharp}.
\end {equation}
(\ref{3.2})  is equivalent to the following equation:
\begin{equation}
\label{3.3}
\frac {d u^{\sharp}}{d t}=f^{\sharp}. 
\end {equation}
Hence
\begin{equation}
\label{3.4}
u^{\sharp}(t,{\bf x},{\bf p})=\int^{t}_{0}f^{\sharp}(s, {\bf x}, {\bf p})ds,
\end{equation}
that is,
\begin{equation}
\label{3.4}
u(t,{\bf x},{\bf p})=\int^{t}_{0}f(s, {\bf x}+\frac{\bf p}{p_{0}}(s-t), {\bf p})ds.
\end{equation}

Let $h(t,{\bf x}, {\bf p})\triangleq u(t,{\bf x}, {\bf p})+f(t,{\bf x}, {\bf p})$.
 Then {Eqn.}~(\ref{traneqn}) can be rechanged as
\begin{equation}
\label{h1}
u+\frac {\partial u}{\partial t}+\frac {{\bf p}}
{p_0}\frac{\partial u}{\partial{\bf x}}=h,
\end {equation}
 thus $u(t, {\bf x}, {\bf p})$ can be expressed as 
\begin{equation}
\label{h2}
u(t,{\bf x},{\bf p})=\int^{t}_{0}e^{s-t}h(s, {\bf x}+\frac{\bf p}{p_{0}}(s-t), {\bf p})ds,
\end{equation}
and it is still a unique solution to the Cauchy problem on {Eqn.}~(\ref{h1}) with its initial datum zero 
when $f\in L^{p}((0,T)\times {\bf R}^{3}\times {\bf R}^{3})$ for any $p\in (1,+\infty)$. 
Notice that $h(t, {\bf x}, {\bf p})$ has compact support 
since ${\sf supp} u\in [\varepsilon_{0},T-\varepsilon_{0}]\times {\bf R}^{3}\times B_{R}$
and that $h$ belong to $L^{q}((0,T)\times {\bf R}^{3}\times {\bf R}^{3})$ if $u$ and $f$ both 
belong to $L^{q}((0,T)\times {\bf R}^{3}\times {\bf R}^{3})$ for any $q\in [1, +\infty]$.
Then we can define a linear operator $\mathfrak{F}$ 
from $L^{q}_{R}((0,T)\times {\bf R}^{3}\times {\bf R}^{3})$ into $L^{q}((0,T)\times {\bf R}^{3})$ as follows:
%
\begin{equation}
\label{oper2}
 \mathfrak{F}(h)=\widetilde{u},
\end{equation}
for every $q\in [1, +\infty]$, 
where   
 $L^{q}_{R}((0,T)\times {\bf R}^{3}\times {\bf R}^{3})$ is defined by 
\begin{equation}
\label{lqr}
L^{q}_{R}=\{ h\in L^{q}((0,T)\times {\bf R}^{3}\times {\bf R}^{3}): h \hbox{ has compact support in } (0,T)\times {\bf R}^{3}\times B_{R} \} 
\end{equation}
and $\widetilde{u}(t,{\bf x})=\int_{{\bf R}^{3}}u(t,{\bf x}, {\bf p})d{\bf p}$ for any $u(t,{\bf x}, {\bf p})$ in (\ref{h2}). 

We below prove that $\mathfrak{F}$ is 
bounded from $L^{q}_{R}((0,T)\times {\bf R}^{3}\times {\bf R}^{3})$ into $L^{q}((0,T)\times {\bf R}^{3})$ for every $q\in [1,+\infty]$.

By H\"{o}lder's inequality, we obtain the following estimate: 
\begin{eqnarray*}
\label{lq1}
\|\widetilde{u}\|_{L^{q}((0,T)\times {\bf R}^{3})}&=&(\int_{0}^{T}\int_{{\bf R}^{3}}|\widetilde{u}(t,{\bf x})|^{q}d{\bf x}dt)^{\frac{1}{q}}\\
 &=&(\int_{0}^{T}\int_{ {\bf R}^{3}}|\int_{B_{R}}u(t,{\bf x},{\bf p})d{\bf p}|^{q}d{\bf x}dt)^{\frac{1}{q}}\\
 &\leq&(\int_{0}^{T}\int_{ {\bf R}^{3}}|(\int_{B_{R}}1^{\frac{q}{q-1}}d{\bf p})^{1-\frac{1}{q}}(\int_{B_{R}}|u(t,{\bf x},{\bf p})|^{q}d{\bf p})^{\frac{1}{q}}|^{q}dtd{\bf x})^{\frac{1}{q}}\\
 &=&\int_{0}^{T}\int_{ {\bf R}^{3}}|(\frac{4}{3}\pi {R}^{3})^{1-\frac{1}{q}}(\int_{B_{R}}|u(t,{\bf x},{\bf p})|^{q}d{\bf p})^{\frac{1}{q}}|^{q}dtd{\bf x})^{\frac{1}{q}}
\end{eqnarray*}
\begin{eqnarray}
\hspace*{0.5cm} =& (\frac{4}{3}\pi {R}^{3})^{1-\frac{1}{q}}(\int_{0}^{T}\int_{ {\bf R}^{3}}\int_{B_{R}}|u(t,{\bf x},{\bf p})|^{q}d{\bf p}d{\bf x}dt)^{\frac{1}{q}}
\end{eqnarray}
for $q \in (1,+\infty)$.
\noindent Because of (\ref{h2}), using H\"{o}lder's inequality again, we have
\begin{eqnarray*}
\label{lq2}
&&(\int_{0}^{T}\int_{ {\bf R}^{3}}\int_{B_{R}}|u(t, {\bf x}, {\bf p})|^{q}d{\bf p}d{\bf x}dt)^{\frac{1}{q}}\\
&=& (\int_{0}^{T}\int_{{\bf R}^{3}}\int_{B_{R}}|\int^{t}_{0}e^{s-t}h(s, {\bf x}+\frac{\bf p}{p_{0}}(s-t), {\bf p})ds|^{q}d{\bf p}d{\bf x}dt)^{\frac{1}{q}}\\
&\leq&(\int_{0}^{T}\int_{{\bf R}^{3}}\int_{B_{R}}|(\int_{0}^{t}e^{\frac{q(s-t)}{q-1}}ds )^{1-1/q}(\int_{0}^{t} |h(s, {\bf x}+\frac{\bf p}{p_{0}}(s-t), {\bf p})|^{q}ds)^{1/q}|^{q}d{\bf p}d{\bf x}dt)^{\frac{1}{q}}
\end{eqnarray*}
\begin{eqnarray}
\leq C_1(\int_{0}^{T}\int_{{\bf R}^{3}}\int_{B_{R}}\int_{0}^{t} |h(s, {\bf x}+\frac{\bf p}{p_{0}}(s-t), {\bf p})|^{q}dsd{\bf p}d{\bf x}dt)^{\frac{1}{q}},
\end{eqnarray}
where $C_1=\left(\frac{q-1}{q}\right)^\frac{q-1}{q}(1-e^{-T})$.
Let ${\bf y}={\bf x}+\frac{{\bf p}}{p_{0}}(s-t)$. 
By Fubini's theorem, we can first calculate the last integral in (\ref{lq2}) with respect to  ${\bf x}$, 
thus, by transforming ${\bf x}$ into ${\bf y}$, giving
\begin{eqnarray*}
\label{lq3}
 &&(\int_{0}^{T}\int_{{\bf R}^{3}}\int_{B_{R}}\int_{0}^{t} |h(s, {\bf x}+\frac{\bf p}{p_{0}}(s-t), {\bf p})|^{q}dsd{\bf p}d{\bf x}dt)^{\frac{1}{q}}
\end{eqnarray*}
\begin{eqnarray}
=(\int_{0}^{T}\int_{{\bf R}^{3}}\int_{B_{R}}\int_{0}^{t} |h( s, {\bf y}, {\bf p})|^{q}d{s}d{\bf p}d{\bf y}d{t})^{\frac{1}{q}}. 
\end{eqnarray}
 It follows that
\begin{eqnarray}
\label{lq4}
 &&(\int_{0}^{T}\int_{{\bf R}^{3}}\int_{B_{R}}\int_{0}^{t} |h( s, {\bf y}, {\bf p})|^{q}d{s}d{\bf p}d{\bf y}d{t})^{\frac{1}{q}}\nonumber \\
 &\leq& (T)^{1/q}(\int_{{\bf R}^{3}}\int_{B_{R}}\int_{0}^{T} |h( s, {\bf y}, {\bf p})|^{q}d{s}d{\bf p}d{\bf y})^{\frac{1}{q}} \nonumber \\
 &=& (T)^{1/q} \| h \|_{L^{q}}.
\end{eqnarray}
By (\ref{lq1}), (\ref{lq2}), (\ref{lq3}) and (\ref{lq4}), we conclude that 
\begin{eqnarray}
\label{lq5}
\|\widetilde{u}\|_{L^{q}} \leq C_2\| h \|_{L^{q}} ,
\end{eqnarray}
where $C_2=\left(\frac{4\pi R^3}{3}\right)^{1-\frac{1}{q}}T^{1/q}\left(\frac{q-1}{q}\right)^\frac{q-1}{q}$.
 Similarly, we can consider two spectial cases: one is for $p=1$ and another is for $p=+\infty$.  
In the $p=1$ case, for  any $h \in L^{1}_{R}((0,T)\times {\bf R}^{3}\times {\bf R}^{3})$, we have
\begin{eqnarray*}
\label{l11}
\|\widetilde{u}\|_{L^{1}((0,T)\times {\bf R}^{3})}&=&\int_{0}^{T}\int_{ {\bf R}^{3}}|\widetilde{u}(t,{\bf x})|d{\bf x}dt\\
 &=&\int_{0}^{T}\int_{ {\bf R}^{3}}|\int_{B_{R}}u(t,{\bf x},{\bf p})d{\bf p}|d{\bf x}dt\\
 &\leq&\int_{0}^{T}\int_{ {\bf R}^{3}\times B_{R}}|u(t,{\bf x},{\bf p})|d{\bf p}d{\bf x}dt\\
 &=& \int_{0}^{T}\int_{{\bf R}^{3}}\int_{B_{R}}|\int^{t}_{0}e^{s-t}h(s, {\bf x}+\frac{\bf p}{p_{0}}(s-t), {\bf p})ds|d{\bf p}d{\bf x}dt\\
 &\leq& \int_{0}^{T}\int_{{\bf R}^{3}}\int_{B_{R}}\int_{0}^{t}|e^{s-t}h(s, {\bf x}+\frac{\bf p}{p_{0}}(s-t), {\bf p})|dsd{\bf p}d{\bf x}dt
\end{eqnarray*}
\begin{eqnarray}
 \hspace*{3cm}\leq\int_{0}^{T}\int_{{\bf R}^{3}}\int_{B_{R}}\int_{0}^{t}|h(s, {\bf x}+\frac{\bf p}{p_{0}}(s-t), {\bf p})|dsd{\bf p}d{\bf x}dt.
\end{eqnarray}
By Fubini's theorem, transforming ${\bf x}$ into ${\bf y}={\bf x}+\frac{\bf p}{p_{0}}(s-t)$, we can get
\begin{eqnarray*}
\label{l12}
 &&\int_{0}^{T}\int_{{\bf R}^{3}}\int_{B_{R}}\int_{0}^{t}| h(s, {\bf x}+\frac{\bf p}{p_{0}}(s-t), {\bf p})|dsd{\bf p}d{\bf x}dt\\
 &=& \int_{0}^{T}\int_{{\bf R}^{3}}\int_{B_{R}}\int_{0}^{t}|h(s, {\bf y}, {\bf p})|d{s}d{\bf p}d{\bf y}d{t}\\
 &\leq&T\int_{{\bf R}^{3}}\int_{B_{R}}\int_{0}^{t}|h( s, {\bf y}, {\bf p})|dsd{\bf p}d{\bf y}
\end{eqnarray*}
\begin{eqnarray}
\leq C_3\|h\|_{L^{1}},
\end{eqnarray}
where $C_3=T$. 
(\ref{l11}) and (\ref{l12}) yield 
\begin{eqnarray}
\label{l13}
\|\widetilde{u}\|_{L^{1}}\leq C_3\|h\|_{L^{1}}.
\end{eqnarray}

In the $p=+\infty$ case,  for $h \in L^{\infty}_{R}((0.T)\times {\bf R}^{3}\times {\bf R}^{3})$, we can also get 
\begin{eqnarray*}
\label{linf1}
\|\widetilde{u}\|_{L^{\infty}((0,T)\times {\bf R}^{3})}&=&\sup\limits_{(t,{\bf x})\in(0,T)\times {\bf R}^{3}}|\widetilde{u}(t,{\bf x})|\\
 &=&\sup\limits_{(t,{\bf x})\in(0,T)\times {\bf R}^{3}}|\int_{B_{R}}u(t,{\bf x}, {\bf p})d{\bf p}|\\
 &=&\sup\limits_{(t,{\bf x})\in(0,T)\times {\bf R}^{3}}|\int_{B_{R}}\int^{t}_{0}e^{s-t}h(s, {\bf x}+\frac{\bf p}{p_{0}}(s-t), {\bf p})dsd{\bf p}|\\
 &\leq&\sup\limits_{(t,{\bf x})\in(0,T)\times {\bf R}^{3}}\int_{B_{R}}\int_{0}^{t}|e^{s-t}h(s, {\bf x}+\frac{\bf p}{p_{0}}(s-t), {\bf p})|dsd{\bf p}
\end{eqnarray*}
\begin{eqnarray}
 \leq \sup\limits_{(t,{\bf x})\in(0,T)\times {\bf R}^{3}}\int_{B_{R}}\int_{0}^{t}|h(s, {\bf x}+\frac{\bf p}{p_{0}}(s-t), {\bf p})|dsd{\bf p}.
\end{eqnarray}
By Fubini's theorem, transforming ${\bf x}$ into ${\bf y}=\bf{x}+\frac{\bf p}{p_{0}}(s-t)$,  we have
\begin{eqnarray*}
\label{linf2}
  &&\sup\limits_{(t,{\bf x})\in(0,T)\times {\bf R}^{3}}\int_{B_{R}}\int_{0}^{t}|h(s, {\bf x}+\frac{\bf p}{p_{0}}(s-t), {\bf p})|dsd{\bf p}\\
  &=&\sup\limits_{(t,{{\bf y}})\in(0,T)\times {\bf R}^{3}}\int_{B_{R}}\int_{0}^{t}|h(s,{\bf y},{\bf p})|d{s}d{\bf p}\\
 &\leq&\sup\limits_{(t,{\bf y})\in(0,T)\times {\bf R}^{3}}(\frac{4}{3}\pi {R}^{3}{t})\sup\limits_{(s,{\bf p})\in(0,t)\times B_{R}}|h(s,{\bf y},{\bf p})|\\
 &\leq& (\frac{4}{3}\pi {R}^{3}T) \sup\limits_{(s, {\bf y}, {\bf p})\in(0,T)\times {\bf R}^{3}\times B_{R}} |h(s,{\bf y},{\bf p})|
\end{eqnarray*}
\begin{eqnarray}
&\leq& C_4\| h \|_{L^{\infty}},
\end{eqnarray}
where $C_4=\frac{4}{3}\pi {R}^{3}T$. 
By (\ref{linf1}) and (\ref{linf2}), we get
\begin{eqnarray}
\label{linf3}
\|\widetilde{u}\|_{L^{\infty}}\leq C_4\|h\|_{L^{\infty}}.
\end{eqnarray}

Hence  $\mathfrak{F}$ is bounded from $L^{q}_{R}((0,T)\times {\bf R}^{3}\times {\bf R}^{3})$ to $L^{q}((0,T)\times {\bf R}^{3})$ for every $1\leq q\leq\infty$.

By using Lemma \ref{le2}, we below show  that  $\mathfrak{F}$ is bounded 
from $L^{2}_{R}((0,T)\times {\bf R}^{3}\times {\bf R}^{3})$  into $H^{\frac{1}{2}}((0,T)\times {\bf R}^{3})$.
Setting $\alpha=1$ in  (\ref{2}) and using Plancherel's identity,  we have
\begin{eqnarray*}
\label{h2l2}
\|\widetilde{u}\|_{H^{\frac{1}{2}}(R\times {\bf R}^{3})}^{2}&=&\int_{R\times {\bf R}^{3}}|{\bf z}^{2}+\tau^{2}|^{1/2} |\int_{{\bf R}^{3}}\widehat{u}(\tau, {\bf z},{\bf p})d{\bf p}|^{2}d{\bf z}d{\tau}\\
&\leq&  2C_R( \int_{R\times {\bf R}^{3}\times {\bf R}^{3}}|\widehat{u}|^{2}d{\bf p}d{\bf z}d{\tau}+\int_{R\times {\bf R}^{3}\times {\bf R}^{3}}|\frac{{\bf p}\cdot\bf{z}}{\sqrt{1+{\bf p}^{2}}}+\tau|^{2}|\widehat{u}|^{2}d{\bf p}d{\bf z}d{\tau})\\
&=&      2C_R( \int_{R\times {\bf R}^{3}\times {\bf R}^{3}}|u|^{2}d{\bf p}d{\bf x}d{t}+\int_{R\times {\bf R}^{3}\times {\bf R}^{3}}|\frac {\partial u}{\partial t}+\frac {{\bf p}}{p_0}\frac{\partial u}{\partial{\bf x}}|^{2}d{\bf p}d{\bf x}d{t})\\
&=&      2C_R( \int_{R\times {\bf R}^{3}\times {\bf R}^{3}}|u|^{2}d{\bf p}d{\bf x}d{t}+\int_{R\times {\bf R}^{3}\times {\bf R}^{3}}| f |^{2}d{\bf p}d{\bf x}d{t})\\
&=&      2C_R( \int_{R\times {\bf R}^{3}\times {\bf R}^{3}}|u|^{2}d{\bf p}d{\bf x}d{t}+\int_{R\times {\bf R}^{3}\times {\bf R}^{3}}| h-u |^{2}d{\bf p}d{\bf x}d{t})
\end{eqnarray*}
\begin{eqnarray}
\leq      6C_R( \int_{R\times {\bf R}^{3}\times {\bf R}^{3}}|u|^{2}d{\bf p}d{\bf x}d{t}+\int_{R\times {\bf R}^{3}\times {\bf R}^{3}}| h |^{2}d{\bf p}d{\bf x}d{t}),
\end{eqnarray}
where $C_R$ is the same as in Lemma \ref{le2}. 
Because $u(t,{\bf x},{\bf p})=\int^{t}_{0}e^{s-t}h(s, {\bf x}+\frac{\bf p}{p_{0}}(s-t), {\bf p})ds$, 
by using (\ref{lq2}), (\ref{lq3}) and (\ref{lq4}) with $q=2$, (\ref{h2l2}) yields
\begin{eqnarray}
\|\widetilde{u}\|_{H^{\frac{1}{2}}}\leq C_5\| h\| _{L^{2}},
\end{eqnarray}
where $C_5=\sqrt{6}(1+T/2)^{1/2}C_R^{1/2}$.
Hence  the operator $\mathfrak{F}$ is bounded from $L^{2}_{R}((0,T)\times {\bf R}^{3}\times {\bf R}^{3})$  into $H^{\frac{1}{2}}((0,T)\times {\bf R}^{3})$.

 By using complex interpolation method (\cite{bl} \cite{t}),  we can know that  $\mathfrak{F}$ is bounded from 
$(L^{q}_{R},L^{2}_{R})_{[\theta]}((0,T)\times {\bf R}^{3}\times {\bf R}^{3})$ into $(L^{q},H^{\frac{1}{2}})_{[\theta]}((0,T)\times {\bf R}^{3})$.
Here, for $1<q<+\infty$, 
\begin{equation}
\label{compin1}
(L^{q}_{R},L^{2}_{R})_{[\theta]}=L^{p}_{R}  \hbox{ for }  \frac{1}{p}=\frac{1-\theta}{q}+\frac{\theta}{2}, \ 0\leq\theta\leq 1
\end{equation}
and
\begin{equation}
\label{compin}
(L^{q},H^{\frac{1}{2}})_{[\theta]}=W^{s,p}   \hbox{ for }   \frac{1}{p}=\frac{1-\theta}{q}+\frac{\theta}{2} ,\ s=\frac{\theta}{2},\  0\leq\theta\leq 1.
\end{equation}
In particular, for $q=1$,
\begin{equation}
\frac{1}{p}=1-\frac{\theta}{2},\ s=\frac{\theta}{2}=1-\frac{1}{p};
\end{equation}
for $q=+\infty$,
\begin{equation}
\frac{1}{p}=\frac{\theta}{2},\ s=\frac{\theta}{2}=\frac{1}{p}.
\end{equation}
Hence it is necessary  to  set the following condition in Theorem \ref{th1}:
\begin{equation}
0<s<\inf(1-\frac{1}{p},\frac{1}{p}).
\end{equation}

According to the above analysis, we can conclude that $\mathfrak{F}$ is a bounded linear operator 
from $L^{p}_{R}((0,T)\times {\bf R}^{3}\times {\bf R}^{3})$ into $W^{s,p}((0,T)\times {\bf R}^{3})$ 
for any $1<p<\infty$ and $0<s<\inf(1-\frac{1}{p},\frac{1}{p})$. It follows that 
  $\widetilde{u}\in W^{s,p}((0,T)\times {\bf R}^{3})$ if  $u(t,{\bf x},{\bf p})$ and 
$f(t,{\bf x},{\bf p})$ both belong to $ L^{p}_{R}((0,T)\times {\bf R}^{3}\times {\bf R}^{3})$  for any $ 1<p<\infty$. 
Since ${\sf supp} u\subset [\varepsilon_{0},T-\varepsilon_{0}]\times {\bf R}^{3}\times B_{R}$, 
we can extend the domain of $u$ from $(0,T)\times {\bf R}^{3}\times {\bf R}^{3}$ 
into ${\bf R}\times {\bf R}^{3}\times {\bf R}^{3}$. Moreover, using Minkowski's inequality, we have
\begin{eqnarray}
\label{lamb0}
\|\widetilde{u}\|_{W^{s,p}((0,T)\times {\bf R}^{3})}&=&(\int_{\bf R}\int_{ {\bf R}^{3}}\int_{\bf R}
\int_{{\bf R}^{3}}\frac{|\widetilde{u(t_{1},{\bf x_{1}})}-\widetilde{u(t_{2},{\bf x_{2}})}|^{2}}
{[(t_{1}-t_{2})^{2}+({\bf x_{1}}-{\bf x_{2}})^{2}]^{\frac{4+sp}{2}}}dt_{1}d{\bf x_{1}}dt_{2}d{\bf x_{2}})^{1/p} \nonumber\\
&\leq& C_6 \| h \|_{L^{p}}\nonumber\\
&\leq & C_6( \| u \|_{L^{p}}+\| f \|_{L^{p}}),
\end{eqnarray}
where $C_6=\max(\frac{4}{3}\pi {R}^{3},T,C_4,C_5)$. 
Let $u_{\lambda}( t, {\bf x}, {\bf p})=u( \lambda t,\lambda {\bf x}, {\bf p})$ with $\lambda >0$.  
Then we have the following equalities:
\begin{eqnarray*}
\label{lamb1}
\|\widetilde{u_{\lambda}}\|_{W^{s,p}({\bf R}\times {\bf R}^{3})}&=&(\int_{\bf R}\int_{ {\bf R}^{3}}\int_{\bf R}\int_{ {\bf R}^{3}}
\frac{|\widetilde{u_{\lambda}(t_{1},{\bf x}_{1})}-\widetilde{u_{\lambda}(t_{2},{\bf x}_{2})}|^{2}}
{[(t_{1}-t_{2})^{2}+({\bf x}_{1}-{\bf x}_{2})^{2}]^{\frac{4+sp}{2}}}d{\bf x}_{1}dt_{1}d{\bf x}_{2}dt_{2})^{1/p}\\
&=&(\int_{\bf R}\int_{ {\bf R}^{3}}\int_{\bf R}\int_{ {\bf R}^{3}}
\frac{|\widetilde{u({\lambda}t_{1},{\lambda}{\bf x}_{1})}-\widetilde{u({\lambda}t_{2},{\lambda}{\bf x}_{2})}|^{2}}
{[(t_{1}-t_{2})^{2}+({\bf x}_{1}-{\bf x}_{2})^{2}]^{\frac{4+sp}{2}}}d{\bf x}_{1}dt_{1}d{\bf x}_{2}dt_{2})^{1/p}\\
&=&(\int_{\bf R}\int_{ {\bf R}^{3}}\int_{\bf R}\int_{ {\bf R}^{3}}
\frac{|\widetilde{u(t_{1},{\bf x}_{1})}-\widetilde{u(t_{2},{\bf x}_{2})}|^{2}}
{[(\frac{t_{1}-t_{2}}{\lambda})^{2}+(\frac{{\bf x}_{1}-{\bf x}_{2}}{\lambda})^{2}]^{\frac{4+sp}{2}}}
d(\frac{{\bf x}_{1}}{\lambda})d(\frac{t_{1}}{\lambda})d(\frac{{\bf x}_{2}}{\lambda})d(\frac{t_{2}}{\lambda}))^{1/p} 
\end{eqnarray*}
\begin{eqnarray}
&=&{\lambda}^{s-\frac{4}{p}}(\int_{\bf R}\int_{ {\bf R}^{3}}\int_{\bf R}\int_{ {\bf R}^{3}}
\frac{|\widetilde{u(t_{1},{\bf x}_{1})}-\widetilde{u(t_{2},{\bf x}_{2})}|^{2}}{[(t_{1}-t_{2})^{2}+({\bf x}_{1}-{\bf x}_{2})^{2}]^{\frac{4+sp}{2}}}
d{\bf x}_{1}dt_{1}d{\bf x}_{2}dt_{2})^{1/p},
\end{eqnarray}
\begin{eqnarray}
\label{lamb2}
\|u_{\lambda}\|_{L^{p}({\bf R}\times {\bf R}^{3}\times {\bf R}^{3})}
&=& (\int_{\bf R}\int_{ {\bf R}^{3}\times {\bf R}^{3}} |u( \lambda t,\lambda {\bf x}, {\bf p})|^{p}d{\bf p}d{\bf x}dt)^\frac{1}{p}\nonumber \\
&\leq & {\lambda}^{-\frac{4}{p}}\|u\|_{L^{p}({\bf R}\times {\bf R}^{3}\times {\bf R}^{3})},
\end{eqnarray}

\begin{eqnarray}
\label{lamb3}
\|\frac {\partial u_{\lambda}}{\partial t}+\frac {{\bf p}}{p_0}\frac{\partial u_{\lambda}}{\partial{\bf x}}\|_{L^{p}({\bf R}\times {\bf R}^{3}\times {\bf R}^{3})}
&=& \int_{\bf R}\int_{ {\bf R}^{3}\times {\bf R}^{3}} |\frac {\partial u_{\lambda}}{\partial t}+\frac {{\bf p}}{p_0}
\frac{\partial u_{\lambda}}{\partial{\bf x}}|^{p}d{\bf p}d{\bf x}dt \nonumber \\
&=& {\lambda}^{1-\frac{4}{p}}\|\frac {\partial u}{\partial t}+\frac {{\bf p}}{p_0}\frac{\partial u}{\partial{\bf x}}\|_{L^{p}({\bf R}\times {\bf R}^{3}\times {\bf R}^{3})}.
\end{eqnarray}
Replacing $u$ with $u_{\lambda}$ in (\ref{lamb0}) and using (\ref{lamb1}), (\ref{lamb2}) and (\ref{lamb3}),  
we can obtain 
\begin{eqnarray*}
\label{lamb4}
&&{\lambda}^{s-\frac{4}{p}}(\int_{\bf R}\int_{ {\bf R}^{3}}\int_{\bf R}\int_{ {\bf R}^{3}}
\frac{|\widetilde{u(t_{1},{\bf x}_{1})}-\widetilde{u(t_{2},{\bf x}_{2})}|^{2}}{[(t_{1}-t_{2})^{2}+({\bf x}_{1}-{\bf x}_{2})^{2}]^{\frac{4+sp}{2}}}
d{\bf x}_{1}dt_{1}d{\bf x}_{2}dt_{2})^{1/p}\\
&\leq&C_6({\lambda}^{-\frac{4}{p}} \| u \|_{L^{p}}+ {\lambda}^{1-\frac{4}{p}}\|
\frac {\partial u}{\partial t}+\frac {{\bf p}}{p_0}\frac{\partial u}{\partial{\bf x}}\|_{L^{p}})
\end{eqnarray*}
\begin{eqnarray}
&=&C_6({\lambda}^{-\frac{4}{p}} \| u \|_{L^{p}}+ {\lambda}^{1-\frac{4}{p}}\| f \|_{L^{p}}).
\end{eqnarray}
Setting $\lambda= \| u \|_{L^{p}}/\| f \|_{L^{p}}$, we have
\begin{eqnarray}
\|\widetilde{u}\|_{W^{s,p}((0,T)\times {\bf R}^{3})}\leq C_6 \| u \|_{L^{p}}^{1-s} \| f \|_{L^{p}}^{s}.
\end{eqnarray}
\noindent This completes the proof of Theorem \ref{th1}.

\section*{Acknowledgement} 
This work was supported by NSFC 11171356. 
The authors would like to thank the referees of this paper for their helpful suggestions on this work.



\begin{appendix}
\section{The proof of Lemma  \ref{le2}}
\label{app}
In order to prove Lemma \ref{le2}, we need two estimates related to a subset in the ball $B_{R}$. First, we have the following lemma:
\begin{lemma}[\cite{j97}]
\label{le3}
For any given $\varepsilon>0 $, if $e^{\prime}\in {\bf R}$, ${\bf e}\in {\bf R}^{3}$, and 
$|{\bf e}|^{2}+e^{\prime 2}=1$, then for all $R>0$,  there exists a constant $C_{R}$ which only depends on $R$, such that
\begin{equation}
\label{le31}
{\sf mes}(E_{R})\leq C_{R}{\varepsilon},
\end{equation}
where $E_{R}=\{{\bf p}\in B_{R}: -\varepsilon-e^{\prime}\leq\frac{{\bf p}\cdot{\bf e}}{\sqrt{|{\bf p}|^{2}+1}}\leq\varepsilon-e^{\prime}\}$.
\end{lemma}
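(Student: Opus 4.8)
The plan is to linearize the nonlinear constraint defining $E_{R}$ by passing to the relativistic velocity variable ${\bf v}={\bf p}/\sqrt{1+|{\bf p}|^{2}}$, which turns the awkward quotient $\frac{{\bf p}\cdot{\bf e}}{\sqrt{|{\bf p}|^{2}+1}}$ into the linear expression ${\bf v}\cdot{\bf e}$. Under this map the ball $B_{R}=\{|{\bf p}|\le R\}$ is carried bijectively onto the Euclidean ball $B_{r}=\{|{\bf v}|\le r\}$ with $r=R/\sqrt{1+R^{2}}<1$, and the defining inequalities of $E_{R}$ collapse to the single band condition $|{\bf v}\cdot{\bf e}+e^{\prime}|\le\varepsilon$. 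First I would record the inverse map ${\bf p}={\bf v}/\sqrt{1-|{\bf v}|^{2}}$ and compute its Jacobian determinant, which works out to $(1-|{\bf v}|^{2})^{-5/2}$; since $1-|{\bf v}|^{2}\ge 1-r^{2}=(1+R^{2})^{-1}$ on $B_{r}$, this factor is bounded above by $(1+R^{2})^{5/2}$. Consequently
\begin{equation*}
{\sf mes}(E_{R})\le (1+R^{2})^{5/2}\,{\sf mes}\bigl(E_{R}^{\bf v}\bigr),\qquad E_{R}^{\bf v}:=B_{r}\cap\{\,|{\bf v}\cdot{\bf e}+e^{\prime}|\le\varepsilon\,\},
\end{equation*}
so the task reduces to bounding the Euclidean measure of a ball intersected with a slab.

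Second, I would estimate ${\sf mes}(E_{R}^{\bf v})$ by slicing along the slab normal ${\bf e}/|{\bf e}|$. Writing $s={\bf v}\cdot({\bf e}/|{\bf e}|)$, the cross-sections of $B_{r}$ are disks of area $\pi(r^{2}-s^{2})\le\pi r^{2}$, while the band confines $s$ to an interval of length at most $2\varepsilon/|{\bf e}|$. This yields ${\sf mes}(E_{R}^{\bf v})\le \pi r^{2}\cdot(2\varepsilon/|{\bf e}|)$, which already gives the desired bound linear in $\varepsilon$ as long as $|{\bf e}|$ is bounded away from zero, say $|{\bf e}|\ge\tfrac12$.

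The hard part will be the degenerate regime $|{\bf e}|<\tfrac12$, where the slicing constant $1/|{\bf e}|$ blows up. The key observation rescuing this case is that the constraint $|{\bf e}|^{2}+e^{\prime 2}=1$ forces $|e^{\prime}|>\tfrac{\sqrt3}{2}$, and the geometry then forces $\varepsilon$ itself to be bounded below: since $|{\bf v}\cdot{\bf e}|\le r|{\bf e}|$ on $B_{r}$, nonemptiness of $E_{R}^{\bf v}$ requires $|e^{\prime}|\le\varepsilon+r|{\bf e}|$, so that $\varepsilon\ge |e^{\prime}|-r|{\bf e}|>\tfrac{\sqrt3}{2}-\tfrac r2\ge\tfrac{\sqrt3-1}{2}=:\varepsilon_{0}>0$ (using $r<1$). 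In this regime I would just invoke the trivial bound ${\sf mes}(E_{R}^{\bf v})\le \tfrac43\pi r^{3}\le(\tfrac43\pi r^{3}/\varepsilon_{0})\,\varepsilon$. Combining the two regimes gives ${\sf mes}(E_{R}^{\bf v})\le C_{r}'\varepsilon$ with $C_{r}'=\max\bigl(4\pi r^{2},\,\tfrac43\pi r^{3}/\varepsilon_{0}\bigr)$, and restoring the Jacobian factor produces ${\sf mes}(E_{R})\le C_{R}\varepsilon$ with $C_{R}=(1+R^{2})^{5/2}C_{r}'$, a constant depending only on $R$. The only genuine subtlety is the transition between the thin-slab and degenerate cases, which the dichotomy on $|{\bf e}|$ resolves cleanly; everything else is the Jacobian computation and the elementary one-dimensional slice estimate.
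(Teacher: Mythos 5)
Your proof is correct, but it takes a genuinely different route from the paper's. The paper stays in momentum space: it splits on $\varepsilon \geq 1/2$ (where the trivial bound ${\sf mes}(B_R)=\frac{4}{3}\pi R^3 \leq \frac{8}{3}\pi R^3\varepsilon$ suffices) versus $\varepsilon < 1/2$, and in the latter case it uses nonemptiness of $E_R$ to derive a lower bound $|{\bf e}| \geq \frac{-R\varepsilon+\sqrt{R^2+1-\varepsilon}}{R^2+1}$, then applies a rotation taking ${\bf e}$ to $(|{\bf e}|,0,0)$ and estimates the resulting (still nonlinear) slab directly, obtaining a bound of the form $8R^2\sqrt{1+R^2}\,\varepsilon/|{\bf e}|$ into which the lower bound on $|{\bf e}|$ is substituted. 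You instead linearize the constraint exactly via the velocity substitution ${\bf v}={\bf p}/\sqrt{1+|{\bf p}|^2}$ (your Jacobian $(1-|{\bf v}|^2)^{-5/2}\leq(1+R^2)^{5/2}$ on $B_r$, $r=R/\sqrt{1+R^2}$, is right), and your dichotomy is on $|{\bf e}|$ rather than on $\varepsilon$: for $|{\bf e}|\geq\tfrac12$ the genuine slab--ball intersection gives $\pi r^2\cdot 2\varepsilon/|{\bf e}|\leq 4\pi r^2\varepsilon$ by Fubini, while for $|{\bf e}|<\tfrac12$ nonemptiness forces $\varepsilon>\frac{\sqrt{3}-1}{2}$, so the trivial volume bound is already linear in $\varepsilon$ (and the empty case is free). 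Both arguments ultimately rest on the same two facts --- a slab of width $2\varepsilon/|{\bf e}|$ meets a ball in measure $O(\varepsilon/|{\bf e}|)$, and the degenerate regime is controlled by nonemptiness of the set --- but they deploy nonemptiness in opposite directions: the paper uses it to bound $|{\bf e}|$ from below in terms of $\varepsilon$ and $R$, whereas you use it to bound $\varepsilon$ from below by an absolute constant. Your change of variables buys an exactly linear constraint, an elementary slicing estimate, and clean constants, at the cost of one Jacobian computation; the paper's rotation-based argument avoids any change of measure but must handle the quotient $\frac{{\bf p}\cdot{\bf e}}{\sqrt{|{\bf p}|^2+1}}$ and the $\varepsilon$-dependent lower bound on $|{\bf e}|$ by hand.
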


\begin{proof}
Since we know that
\begin{equation}
\label{ep1}
{\sf mes}(E_{R})\leq {\sf mes}(B_{R})=\frac{4}{3}{\pi}{R}^{3}\frac{\varepsilon}{\varepsilon}\leq \frac{8}{3}{\pi}{R}^{3}{\varepsilon}
\end{equation}
for ${\varepsilon}\geq 1/2$, in order to show (\ref{le31}), 
it suffices to consider the case when $0<{\varepsilon}<1/2$.

For any ${\varepsilon}\in (0,1/2)$,  since ${\bf p}\in B_{R}$,  we can show 
that $|\frac{{\bf p}\cdot{\bf e}}{\sqrt{|{\bf p}|^{2}+1}}+e^{\prime} |> \varepsilon$ if 
we have the following inequality:
\begin{equation}
\label{ep2}
|e^{\prime} |-R|{\bf e}|> \varepsilon.
\end{equation}
Because $|{\bf e}|^{2}+e^{\prime 2}=1$, (\ref{ep2}) is equal to $\sqrt{1-|{\bf e}|^{2}}-R|{\bf e}|> \varepsilon$ which implies that

\begin{equation}
\label{ep3}
|{\bf e}|< \frac{-R{\varepsilon}+\sqrt{R^{2}+1-{\varepsilon}}}{R^{2}+1}.
\end{equation}

\noindent Therefore we have 

\begin{equation}
\label{ep4}
|{\bf e}|\geq \frac{-R{\varepsilon}+\sqrt{R^{2}+1-{\varepsilon}}}{R^{2}+1}
\end{equation}

\noindent when $|\frac{{\bf p}\cdot{\bf e}}{\sqrt{|{\bf p}|^{2}+1}}+e^{\prime}|\leq \varepsilon$.
We introduce the following rotation transform:
\begin{equation}
\label{rotr}
\phi(e_{1}, e_{2}, e_{3})^{T}=(|{\bf e}|, 0, 0),
\end{equation}
thus getting
\begin{eqnarray*}
\label{mes}
{\sf mes}(E_{R})&=&{\sf mes}(\{{\bf p}\in B_{R}: -\varepsilon-e^{\prime}\leq\frac{{\bf p}\cdot{\bf e}}{\sqrt{|{\bf p}|^{2}+1}}\leq\varepsilon-e^{\prime}\})
\nonumber\\
&=&{\sf mes}(\{\phi{\bf p}\in B_{R}: -\varepsilon-e^{\prime}\leq\frac{(\phi{\bf p})\cdot(\phi{\bf e})}{\sqrt{|\phi{\bf p}|^{2}+1}}\leq\varepsilon-e^{\prime}\})
\nonumber\\
&=&{\sf mes}(\{\phi{\bf p}\in B_{R}: -\varepsilon-e^{\prime}\leq\frac{(\phi{\bf p})\cdot|{\bf e}|}{\sqrt{|\phi{\bf p}|^{2}+1}}\leq\varepsilon-e^{\prime}\})
\nonumber\\
&\leq& 4R^{2}{\sf mes}(\{(\phi{\bf p})_{1}: \phi{\bf p}\in B_{R} \hbox{ and } -\varepsilon-e^{\prime}\leq\frac{(\phi{\bf p})_{1}|{\bf e}|}
{\sqrt{|\phi{\bf p}|^{2}+1}}\leq\varepsilon-e^{\prime}\})
\end{eqnarray*}
\begin{eqnarray*}
\leq 4R^{2}{\sf mes}(\{(\phi{\bf p})_{1}: \phi{\bf p}\in B_{R} \hbox{ and } \frac{-\varepsilon-e^{\prime}}{|{\bf e}|}
\sqrt{1+R^{2}}\leq\frac{(\phi{\bf p})_{1}|{\bf e}|}{\sqrt{|\phi{\bf p}|^{2}+1}}\leq\frac{\varepsilon-e^{\prime}}{|{\bf e}|}\sqrt{1+R^{2}}\})
\end{eqnarray*}
\begin{eqnarray}
&\leq& 8R^{2}\sqrt{1+R^{2}}\frac{\varepsilon}{|{\bf e}|}\nonumber\\
&\leq& 8R^{2}\sqrt{1+R^{2}}{\varepsilon}{|\frac{R^{2}+1}{\sqrt{R^{2}+1-{\varepsilon}} - R{\varepsilon}}|}\nonumber\\
&\leq& 16R(1+R^{2})^{3/2}{\varepsilon}.
\end{eqnarray}
Put $C_{R}=\max(\frac{8}{3}{\pi}{R}^{3}, 16R(1+R^{2})^{3/2})$. Then, by (\ref{ep1}) and (\ref{mes}), (\ref{le31}) holds. 
This completes the proof of Lemma  \ref{le3}.
\end{proof}

Then we have the following estimate of an integral:
\begin{lemma}[\cite{j97}]
\label{le4}
For any given $\varepsilon>0 $, $e^{\prime}\in {\bf R}$, ${\bf e}\in {\bf R}^{3}$, and $|{\bf e}|^{2}+e^{\prime 2}=1$, then for all $R>0$, 
there exists a constant $C_{R}$ which only depends on $R$,  such that
\begin{equation}
\label{le4ineq}
\int_{B_{R}(|\frac{{\bf p}\cdot{\bf e}}{\sqrt{|{\bf p}|^{2}+1}}+e^{\prime}|>\varepsilon)}\frac{1}
{|\frac{{\bf p}\cdot{\bf e}}{\sqrt{|{\bf p}|^{2}+1}}+e^{\prime}|^{2}}d{\bf{p}}\leq\frac{2C_{R}}{\varepsilon},
\end{equation}
where $C_{R}$ is equal to that given in Lemma \ref{le3}.
\end{lemma}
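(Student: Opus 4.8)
The plan is to reduce the claim to the measure estimate already supplied by Lemma \ref{le3} and then to integrate it by means of the layer-cake (distribution-function) formula. Throughout I would write
\begin{equation}
\Psi({\bf p})=\frac{{\bf p}\cdot{\bf e}}{\sqrt{|{\bf p}|^{2}+1}}+e^{\prime}
\end{equation}
for the quantity common to both lemmas. The set $E_{R}$ of Lemma \ref{le3} is precisely $\{{\bf p}\in B_{R}:|\Psi({\bf p})|\leq\varepsilon\}$, and since that lemma holds for \emph{every} $\varepsilon>0$, I would read it as the scale-free bound ${\sf mes}(\{{\bf p}\in B_{R}:|\Psi({\bf p})|\leq\delta\})\leq C_{R}\delta$, valid for all $\delta>0$ with the same constant $C_{R}$. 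This linear-in-threshold growth is the whole engine of the estimate.

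First I would apply the layer-cake formula to the nonnegative function $1/|\Psi|^{2}$ over the region $F=\{{\bf p}\in B_{R}:|\Psi({\bf p})|>\varepsilon\}$ appearing in (\ref{le4ineq}):
\begin{equation}
\int_{F}\frac{d{\bf p}}{|\Psi({\bf p})|^{2}}=\int_{0}^{\infty}{\sf mes}\Big(\big\{{\bf p}\in F:\tfrac{1}{|\Psi({\bf p})|^{2}}>\lambda\big\}\Big)\,d\lambda.
\end{equation}
The super-level condition $1/|\Psi|^{2}>\lambda$ is the same as $|\Psi|<\lambda^{-1/2}$, so the set inside the measure equals $\{{\bf p}\in B_{R}:\varepsilon<|\Psi({\bf p})|<\lambda^{-1/2}\}$. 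On $F$ one has $1/|\Psi|^{2}<\varepsilon^{-2}$, so this set is empty once $\lambda\geq\varepsilon^{-2}$; for $0<\lambda<\varepsilon^{-2}$ it is contained in $\{{\bf p}\in B_{R}:|\Psi({\bf p})|\leq\lambda^{-1/2}\}$, whose measure is at most $C_{R}\lambda^{-1/2}$ by the scale-free form of Lemma \ref{le3}. Substituting this bound and integrating the single convergent power of $\lambda$ gives
\begin{equation}
\int_{F}\frac{d{\bf p}}{|\Psi({\bf p})|^{2}}\leq\int_{0}^{\varepsilon^{-2}}C_{R}\,\lambda^{-1/2}\,d\lambda=C_{R}\big[2\lambda^{1/2}\big]_{0}^{\varepsilon^{-2}}=\frac{2C_{R}}{\varepsilon},
\end{equation}
which is exactly (\ref{le4ineq}), and with precisely the constant $C_{R}$ of Lemma \ref{le3}.

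The only genuine content is the linear measure bound of Lemma \ref{le3}; once that is in hand, the rest is layer-cake bookkeeping, so there is no real obstacle beyond keeping the constants sharp. The point to watch is that the upper cutoff $\lambda=\varepsilon^{-2}$, forced by the restriction $|\Psi|>\varepsilon$, is what manufactures the factor $\varepsilon^{-1}$, while integrability at $\lambda=0$ is automatic because $\lambda^{-1/2}$ is integrable there; hence the loose bound $C_{R}\lambda^{-1/2}$, which may exceed ${\sf mes}(B_{R})$ for small $\lambda$, does no harm. An equivalent route would be a dyadic decomposition of $F$ into shells $2^{k}\varepsilon<|\Psi|\leq 2^{k+1}\varepsilon$ and summation of the resulting geometric series, but that produces a slightly larger constant, so I would favor the layer-cake computation precisely because it lands on $2C_{R}/\varepsilon$ on the nose.
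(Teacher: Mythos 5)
Your proof is correct and is essentially the same argument as the paper's: the paper writes $\frac{1}{a^{2}}=2\int_{a}^{\infty}t^{-3}\,dt$ with $a=|\frac{{\bf p}\cdot{\bf e}}{\sqrt{|{\bf p}|^{2}+1}}+e^{\prime}|$, swaps the order of integration by Fubini, and applies the measure bound of Lemma \ref{le3} at every threshold $t$ --- which is precisely your layer-cake computation after the change of variables $\lambda=t^{-2}$. Both versions rest on reading Lemma \ref{le3} as a bound valid at all scales and land on the identical constant $2C_{R}/\varepsilon$.
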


\begin{proof}
Notice that the integral on the right of (\ref{le4ineq}) can be rewritten as 
\begin{eqnarray}
\label{mes1}
&& \int_{B_{R}(|\frac{{\bf p}\cdot{\bf e}}{\sqrt{|{\bf p}|^{2}+1}}+e^{\prime}|>\varepsilon)}\frac{1}
{|\frac{{\bf p}\cdot{\bf e}}{\sqrt{|{\bf p}|^{2}+1}}+e^{\prime}|^{2}}d{\bf{p}}\nonumber\\
&=&  2 \int_{B_{R}(|\frac{{\bf p}\cdot{\bf e}}{\sqrt{|{\bf p}|^{2}+1}}+e^{\prime}|>\varepsilon)}
\int_{ |\frac{{\bf p}\cdot{\bf e}}{\sqrt{|{\bf p}|^{2}+1}}+e^{\prime} | }^{+\infty}\frac{1}{t^{3}}d{t}d{\bf p}\nonumber\\
&=&  2  \int_{B_{R}(|\frac{{\bf p}\cdot{\bf e}}{\sqrt{|{\bf p}|^{2}+1}}+e^{\prime}|>\varepsilon)}
(\int_{\varepsilon}^{+\infty}t^{-3}\chi_{\{|\frac{{\bf p}\cdot{\bf e}}{\sqrt{|{\bf p}|^{2}+1}}+e^{\prime}|, +\infty\}}(t)d{t})d{\bf p}\nonumber\\
&=&  2 \int_{\varepsilon}^{+\infty} t^{-3} ( \int_{B_{R}
(|\frac{{\bf p}\cdot{\bf e}}{\sqrt{|{\bf p}|^{2}+1}}+e^{\prime}|>\varepsilon)}
\chi_{\{|\frac{{\bf p}\cdot{\bf e}}{\sqrt{|{\bf p}|^{2}+1}}+e^{\prime}|\leq t\}}({\bf p})d{\bf p})d{t},
\end{eqnarray}
where $\chi_{\{|\frac{{\bf p}\cdot{\bf e}}{\sqrt{|{\bf p}|^{2}+1}}+e^{\prime}|, +\infty\}}(t)$ 
and $\chi_{\{|\frac{{\bf p}\cdot{\bf e}}{\sqrt{|{\bf p}|^{2}+1}}+e^{\prime}|\leq t\}}({\bf p})$ 
are characteristic functions.
It follows from Lemma \ref{le3} that
\begin{eqnarray}
\label{mes2}
 \int_{B_{R}(|\frac{{\bf p}\cdot{\bf e}}{\sqrt{|{\bf p}|^{2}+1}}+e^{\prime}|>\varepsilon)}
\chi_{\{|\frac{{\bf p}\cdot{\bf e}}{\sqrt{|{\bf p}|^{2}+1}}+e^{\prime}|\leq t\}}({\bf p})d{\bf p}\nonumber\\
\leq {\sf mes} ({\bf p}\in B_{R}:|\frac{{\bf p}\cdot{\bf e}}{\sqrt{|{\bf p}|^{2}+1}}+e^{\prime}|\leq t ) \leq C_{R}t.
\end{eqnarray}
Putting (\ref{mes2}) into (\ref{mes1}), we can get 
\begin{eqnarray}
\label{mes3}
&& \int_{B_{R}(|\frac{{\bf p}\cdot{\bf e}}{\sqrt{|{\bf p}|^{2}+1}}+e^{\prime}|>\varepsilon)}
\frac{1}{|\frac{{\bf p}\cdot{\bf e}}{\sqrt{|{\bf p}|^{2}+1}}+e^{\prime}|^{2}}d{\bf{p}}\nonumber\\
&\leq & 2C_{R}\int_{\varepsilon}^{+\infty} t^{-2}d{t} \leq 2C_{R}/{\varepsilon}.
\end{eqnarray}
\end{proof}

\begin{proof}[Proof of Lemma \ref{le2}]
We first consider $\|\widetilde{u}\|_{H^{\frac{1}{2}}((0,T)\times {\bf R}^{3})}$. 
Since ${\sf supp} u\subset [\varepsilon_{0},T-\varepsilon_{0}]\times {\bf R}^{3}\times B_{R}$,
we extend the domain of $u=u(t, {\bf x}, {\bf p})$ from $(0,T)\times {\bf R}^{3}\times {\bf R}^{3}$
into ${\bf R}\times {\bf R}^{3}\times {\bf R}^{3}$ such that $u=0$ outside $(0,T)\times {\bf R}^{3}\times {\bf R}^{3}$
and then define $\widehat{u}(\tau, {\bf z}, {\bf p})$ as the Fourier transformation of $u(t, \bf x, \bf p)$ with respect to $t$ and ${\bf x}$, 
that is, 
\begin{equation}
\label{fou}
\widehat{u}(\tau, {\bf z}, {\bf p})=\left(\frac{1}{\sqrt{2\pi}}\right)^4\int\int_{{\bf R}\times {\bf R}^{3}}u(t, {\bf x},{ \bf p})e^{-i\tau t-i{\bf x}\cdot {\bf z}}d{\bf x}d{\bf t}.
\end{equation}
Because $u(t,\bf x,\bf p)$ and $\frac {\partial u}{\partial t}+\frac {{\bf p}}{p_0}
\frac{\partial u}{\partial{\bf x}}(t,{\bf x},{\bf p} )$ both belong to $ L^{2}({\bf R}\times {\bf R}^{3}\times {\bf R}^{3})$,  using Plancherel's identity, we know that 
$\widehat{u}$ and $(\tau+\frac{{\bf p}\cdot \bf {z}}{\sqrt{1+{\bf p}^{2}}})\widehat{u}$ both belong to 
$ L^{2}({\bf R}\times {\bf R}^{3}\times {\bf R}^{3})$ too.
By the definition of the norm of $\widetilde{u}$ in $H^{\frac{1}{2}}({\bf R}\times {\bf R}^{3})$, we know that 
\begin{eqnarray}
\label{normh2}
\|\widetilde{u}\|_{H^{\frac{1}{2}}(R\times {\bf R}^{3})}=(\int_{{\bf R}\times {\bf R}^{3}}|{\bf z}^{2}+\tau^{2}|^{1/2} |
\int_{{\bf R}^{3}}\widehat{u}(\tau, {\bf z},{\bf p})d{\bf p}|^{2}d{\bf z}d{\tau})^{1/2}.
\end{eqnarray} 
It is worth mentioning that (\ref{normh1}) and (\ref{normh2}) are equivalent norms (see proposition 1.37 in \cite{bcd}).
Then we estemate the integral in (\ref{normh2}). For every $\alpha>0$, we have 
\begin{eqnarray}
\label{1}
\|\widetilde{u}\|_{H^{\frac{1}{2}}(R\times {\bf R}^{3})}^{2}&=&\int_{{\bf R}\times {\bf R}^{3}}| {\bf z}^{2}+\tau^{2}|^{1/2}
 |\int_{{\bf R}^{3}}\widehat{u}(\tau, {\bf z}, {\bf p})d{\bf p}|^{2}d{\bf z}d{\tau}\nonumber\\
&=&\int_{{\bf R}\times {\bf R}^{3}}| {\bf z}^{2}+\tau^{2}|^{1/2} |\int_{|\frac{{\bf p}\cdot {\bf z}}{\sqrt{1+{\bf p}^{2}}}+\tau|\leq\alpha}
\widehat{u}(\tau, {\bf z},{\bf p})d{\bf p}|^{2}d{\bf z}d{\tau}\nonumber\\
&+&\int_{{\bf R}\times {\bf R}^{3}}| {\bf z}^{2}+\tau^{2}|^{1/2} |\int_{|\frac{{\bf p}\cdot {\bf z}}{\sqrt{1+{\bf p}^{2}}}+\tau|>\alpha}
\widehat{u}(\tau, {\bf z},{\bf p})d{\bf p}|^{2}d{\bf z}d{\tau} \nonumber\\
&\triangleq & I_{1}+ I_{2},
\end{eqnarray}
where, 
\begin{eqnarray}
\label{i1}
I_{1}&=&\int_{{\bf R}\times {\bf R}^{3}}| {\bf z}^{2}+\tau^{2}|^{1/2} |\int_{|\frac{{\bf p}\cdot {\bf z}}{\sqrt{1+{\bf p}^{2}}}+\tau|\leq\alpha}
\widehat{u}(\tau, {\bf z},{\bf p})d{\bf p}|^{2}d{\bf z}d{\tau},
\end{eqnarray}
\begin{eqnarray}
\label{i2}
I_{2}&=&\int_{{\bf R}\times {\bf R}^{3}}| {\bf z}^{2}+\tau^{2}|^{1/2} |\int_{|\frac{{\bf p}\cdot {\bf z}}{\sqrt{1+{\bf p}^{2}}}+\tau|>\alpha}
\widehat{u}(\tau, {\bf z},{\bf p})d{\bf p}|^{2}d{\bf z}d{\tau}.
\end{eqnarray}
We below estimate $I_{1}$ and $I_{2}$ respectively. 
For $I_{1}$, using the Cauchy-Schwarz inequality, we have
\begin{eqnarray}
\label{i1_1}
I_{1}&=&\int_{{\bf R}\times {\bf R}^{3}}|{\bf z}^{2}+\tau^{2}|^{1/2} |\int_{|\frac{{\bf p}\cdot{\bf z}}{\sqrt{1+{\bf p}^{2}}}+\tau|\leq\alpha}
\widehat{u}(\tau, {\bf z},{\bf p})d{\bf p}|^{2}d{\bf z}d{\tau}\nonumber\\
&\leq& \int_{{\bf R}\times {\bf R}^{3}}|{\bf z}^{2}+{\tau}^{2}|^{1/2}(\int_{|\frac{{\bf p}\cdot{\bf z}}
{\sqrt{1+{\bf p}^{2}}}+\tau|\leq\alpha}d{\bf p})(\int_{|\frac{{\bf p}\cdot{\bf z}}{\sqrt{1+{\bf p}^{2}}}+\tau|\leq\alpha}|\widehat{u}|^{2}d{\bf p})d{\bf z}d{\tau}.
\end{eqnarray}
From Lemma \ref{le3}, we obtain
\begin{eqnarray}
\label{i1_2}
|{\bf z}^{2}+{\tau}^{2}|^{1/2}\int_{|\frac{{\bf p}\cdot{\bf z}}{\sqrt{1+{\bf p}^{2}}}+\tau|\leq\alpha}d{\bf p}\leq C_{R}{\alpha}.
\end{eqnarray}
It follows from (\ref{i1_1}) and (\ref{i1_2}) that
\begin{equation}
\label{i1_3}
I_{1}\leq C_{R}{\alpha} \int_{{\bf R}\times {\bf R}^{3}\times {\bf R}^{3}}|\widehat{u}|^{2}d{\bf p}d{\bf z}d{\tau}.
\end{equation}

Similarly, applying the Cauchy-Schwarz inequality again, we have
\begin{eqnarray*}
\label{i2_1}
I_{2}&=&\int_{{\bf R}\times {\bf R}^{3}}|{\bf z}^{2}+\tau^{2}|^{1/2} |\int_{|\frac{{\bf p}\cdot{\bf z}}{\sqrt{1+{\bf p}^{2}}}+\tau|>\alpha}
\widehat{u}(\tau, {\bf z},{\bf p})d{\bf p}|^{2}d{\bf z}d{\tau}
\end{eqnarray*}
\begin{eqnarray}
\leq\int_{{\bf R}\times {\bf R}^{3}}(\int_{|\frac{{\bf p}\cdot{\bf z}}{\sqrt{1+{\bf p}^{2}}}+\tau|>\alpha}
\frac{|{\bf z}^{2}+{\tau}^{2}|^{1/2}}{|\frac{{\bf p}\cdot{\bf z}}{\sqrt{1+{\bf p}^{2}}}+\tau|^{2}}d{\bf p})
(\int_{|\frac{{\bf p}\cdot{\bf z}}{\sqrt{1+{\bf p}^{2}}}+\tau|>\alpha}|\frac{{\bf p}\cdot{\bf z}}
{\sqrt{1+{\bf p}^{2}}}+\tau|^{2}|\widehat{u}|^{2}d{\bf p})d{\bf z}d{\tau}.
\end{eqnarray}
From Lemma \ref{le4}, we obtain
\begin{eqnarray}
\label{i2_2}
 &&\int_{{\bf R}\times {\bf R}^{3}}\int_{|\frac{{\bf p}\cdot{\bf z}}{\sqrt{1+{\bf p}^{2}}}+\tau|>\alpha}\frac{|{\bf z}^{2}+{\tau}^{2}|^{1/2}}{|\frac{{\bf p}\cdot{\bf z}}
{\sqrt{1+{\bf p}^{2}}}+\tau|^{2}}d{\bf p}d{\bf z}d{\tau}\nonumber\\
 &=&\int_{{\bf R}\times {\bf R}^{3}}\frac{1}{\sqrt{{\bf z}^{2}+{\tau}^{2}}}(\int_{|(\frac{{\bf p}\cdot{\bf z}}
{\sqrt{1+{\bf p}^{2}}}+\tau)/{\sqrt{{\bf z}^{2}+{\tau}^{2}}}|>\alpha/{\sqrt{{\bf z}^{2}+{\tau}^{2}}}}
\frac{1}{|(\frac{{\bf p}\cdot{\bf z}}{\sqrt{1+{\bf p}^{2}}}+\tau)/{\sqrt{{\bf z}^{2}+{\tau}^{2}}}|^{2}}d{\bf p})d{\bf z}d{\tau}\nonumber\\
& \leq&\frac{1}{\sqrt{{\bf z}^{2}+{\tau}^{2}}}\cdot2C_{R}\cdot\frac{\sqrt{{\bf z}^{2}+{\tau}^{2}}}{\alpha}=\frac{2C_{R}}{\alpha}.
\end{eqnarray}
By (\ref{i2_1}) and (\ref{i2_2}), it follows that
\begin{equation}
\label{i2_3}
I_{2}\leq\frac{2C_{R}}{\alpha}\int_{{\bf R}\times {\bf R}^{3}\times {\bf R}^{3}}|
\frac{{\bf p}\cdot{\bf z}}{\sqrt{1+{\bf p}^{2}}}+\tau|^{2}|\widehat{u}|^{2}d{\bf p}d{\bf z}d{\tau}.
\end{equation}

Hence, by (\ref{1}), (\ref{i1_3}) and (\ref{i2_3}),  we know that
\begin{eqnarray*}
\label{2}
\|\widetilde{u}\|_{H^{\frac{1}{2}}({\bf R}\times {\bf R}^{3})}^{2}&=&\int_{{\bf R}\times {\bf R}^{3}}|{\bf z}^{2}+\tau^{2}|^{1/2} |
\int_{{\bf R}^{3}}\widehat{u}(\tau, {\bf z},{\bf p})d{\bf p}|^{2}d{\bf z}d{\tau}\\
&\leq& C_{R}{\alpha} \int_{{\bf R}\times {\bf R}^{3}\times {\bf R}^{3}}|\widehat{u}|^{2}d{\bf p}d{\bf z}d{\tau}
+\frac{2C_{R}}{\alpha}\int_{{\bf R}\times {\bf R}^{3}\times {\bf R}^{3}}|\frac{{\bf p}\cdot{\bf z}}
{\sqrt{1+{\bf p}^{2}}}+\tau|^{2}|\widehat{u}|^{2}d{\bf p}d{\bf z}d{\tau}
\end{eqnarray*}
\begin{eqnarray}
\leq 2C_{R}({\alpha} \int_{{\bf R}\times {\bf R}^{3}\times {\bf R}^{3}}|\widehat{u}|^{2}d{\bf p}d{\bf z}d{\tau}+\frac{1}{\alpha}
\int_{{\bf R}\times {\bf R}^{3}\times {\bf R}^{3}}|\frac{{\bf p}\cdot\bf{z}}{\sqrt{1+{\bf p}^{2}}}+\tau|^{2}|\widehat{u}|^{2}d{\bf p}d{\bf z}d{\tau}).
\end{eqnarray}
In (\ref{2}), we set  
\begin{eqnarray}
\label{alpha}
\alpha=( \int_{{\bf R}\times {\bf R}^{3}\times {\bf R}^{3}}|\widehat{u}|^{2}d{\bf p}d{\bf z}d{\tau})^{1/2}
(\int_{{\bf R}\times {\bf R}^{3}\times {\bf R}^{3}}|\frac{{\bf p}\cdot{\bf z}}{\sqrt{1+{\bf p}^{2}}}+\tau|^{2}|\widehat{u}|^{2}d{\bf p}d{\bf z}d{\tau})^{1/2},
\end{eqnarray}
thus getting 
\begin{eqnarray}
\label{3}
\|\widetilde{u}\|_{H^{\frac{1}{2}}({\bf R}\times {\bf R}^{3})}^{2}&=&\int_{{\bf R}\times {\bf R}^{3}}|{\bf z}^{2}+\tau^{2}|^{1/2} |
\int_{{\bf R}^{3}}\widehat{u}(\tau, {\bf z},{\bf p})d{\bf p}|^{2}d{\bf z}d{\tau}\nonumber\\
&\leq& 2C_R( \int_{{\bf R}\times {\bf R}^{3}\times {\bf R}^{3}}|\widehat{u}|^{2}d{\bf p}d{\bf z}d{\tau})^{1/2}(\int_{{\bf R}\times {\bf R}^{3}\times {\bf R}^{3}}|
\frac{{\bf p}\cdot{\bf z}}{\sqrt{1+\bf{p}^{2}}}+\tau|^{2}|\widehat{u}|^{2}d{\bf p}d{\bf z}d{\tau})^{1/2}\nonumber\\
&=& 2C_R \|u\|_{L^{2}}\| \frac {\partial u}{\partial t}+\frac {{\bf p}}{p_0}\frac{\partial u}{\partial{\bf x}}(t,{\bf x},{\bf p})\|_{L^{2}}.
\end{eqnarray}
Therefore $\widetilde{u}=\int_{{\bf R}^{3}}u(t, {\bf x}, {\bf p})d{\bf p}$ belongs to $H^{\frac{1}{2}}(R\times {\bf R}^{3})$.
Since (\ref{normh1}) and (\ref{normh2})  are equivalent, we know from (\ref{3}) that
\begin{eqnarray}
\|\widetilde{u}\|_{H^{\frac{1}{2}}((0,T)\times {\bf R}^{3})}&=&\int_{{\bf R}\times {\bf R}^{3}}\int_{{\bf R}\times {\bf R}^{3}}
\frac{|\widetilde{u(t_{1},{\bf x}_{1})}-\widetilde{u(t_{2},{\bf x}_{2})}|^{2}}
{[(t_{1}-t_{2})^{2}+({\bf x}_{1}-{\bf x}_{2})^{2}]^{\frac{5}{2}}}dt_{1}d{\bf x}_{1}dt_{2}d{\bf x}_{2}\nonumber\\
&\leq & \sqrt{2}C_R^{1/2}\| u \|_{L^{2}}^{1/2} \|f\|_{L^{2}}^{1/2}.
\end{eqnarray}
This completes our proof of Lemma \ref{le2}.
\end{proof}

\end{appendix}

\end{document}